\def\BBox{\kern  -0.2cm\hbox{\vrule width 0.2cm height 0.2cm}}
\newtheorem{theorem}{Theorem}[section]
\newtheorem{conjecture}[theorem]{Conjecture}
\setlist{noitemsep}
\title{\textbf{A note on 2--bisections of claw--free cubic graphs}}
\author{Mari\'{e}n Abreu$^{1}$\thanks{The research that led to the present paper was partially supported by a grant of the group GNSAGA of INdAM and by the Italian Ministry Research Project PRIN 2012 ``Geometric Structures, Combinatorics and their Applications''.}, Jan Goedgebeur$^2$\thanks{Supported by a Postdoctoral Fellowship of the Research Foundation Flanders (FWO).
\newline {\em Email addresses:} marien.abreu@unibas.it (M. Abreu),~ jan.goedgebeur@ugent.be (J. Goedgebeur), ~ domenico.labbate@unibas.it (D. Labbate), ~ giuseppe.mazzuoccolo@univr.it (G. Mazzuoccolo)}, \\ Domenico Labbate$^{1}$$^*$, Giuseppe Mazzuoccolo$^{3}$$^*$
\\[2ex]
\footnotesize $^1$ Dipartimento di Matematica, Universit\`{a} degli Studi della Basilicata,\\[-3mm]
\footnotesize Viale dell'Ateneo Lucano, I-85100 Potenza, Italy.\\
\footnotesize $^2$ Deparment of Applied Mathematics, Computer Science and Statistics\\[-3mm]
\footnotesize Ghent University, Krijgslaan 281 - S9, 9000 Ghent, Belgium.\\
\footnotesize $^3$ Dipartimento di Informatica,\\[-3mm]
\footnotesize Universit\`{a} degli Studi di Verona, Strada le Grazie 15, 37134 Verona, Italy.}
\date{}
\begin{document}
\maketitle

\begin{abstract}
A \emph{$k$--bisection} of a bridgeless cubic graph $G$ is a $2$--colouring of its vertex set such that the colour classes have the same cardinality and all connected components in the two subgraphs induced by the colour classes have order at most $k$.
Ban and Linial conjectured that {\em every bridgeless cubic graph admits a $2$--bisection except for the Petersen graph}.

In this note, we prove Ban--Linial's conjecture for claw--free cubic graphs.
%Note that, this is a follow up of the paper \cite{AGLM}, in which we give, among other stuff, a detailed insight into the conjecture of Ban--Linial proving it for the class of cubic cycle permutation graphs.

\end{abstract}

{\bf Keywords:}
colouring; bisection; claw--free graph;  cubic graph.

%%%%%%%%%%%%%%%%%%%%%%%%%%%%%%%%%%%%%%%%%%%%%%

\section{Introduction}

All graphs considered in this paper are finite and simple (without loops or multiple edges). Most of our terminology is standard; for further definitions and notation not explicitly stated in the paper, please refer to~\cite{BM}.

A {\em bisection} of a cubic graph $G=(V,E)$ is a partition of its vertex set $V$ into two disjoint subsets $({\cal B}, {\cal W})$ of the same cardinality. We will often identify a bisection of $G$ with the vertex colouring of $G$ with two colours $B$ (black) and $W$ (white), such that every vertex of ${\cal B}$ and ${\cal W}$ has colour $B$ and $W$, respectively.  Note that the colouring does not need to be proper and in what follows we refer to a connected component of the subgraphs induced by a colour class as a \emph{monochromatic component}.
Following this terminology, we define a \emph{$k$--bisection} of a graph $G$ as a $2$--colouring $c$ of the vertex set $V(G)$ such that:
%{\em (i)
%$|{\cal B}|=|{\cal W}|$ (i.e.\ it is a bisection), (ii) each monochromatic component has at most $k$ vertices.}

\begin{enumerate}[label=(\arabic*)]
\item[(i)]
$|{\cal B}|=|{\cal W}|$ (i.e.\ it is a bisection), and
\item[(ii)]
each monochromatic component has at most $k$ vertices.
\end{enumerate}

%Note that this is equivalent to the {\em $(k+2)$--weak bisections} introduced by Esperet, Tarsi and the fourth author in~\cite{EMT15}.
%Indeed the existence of a $k$--bisection ($(k+2)$--weak bisection) is a necessary condition for the existence of a nowhere--zero $(k+2)$--flow, as it directly follows from a result by Jaeger~\cite{Jaeger} stating that {\em every cubic graph $G$ with a circular nowhere--zero $r$--flow has a $\lfloor r \rfloor$--weak bisection} (cf.~\cite{Jaeger} and~\cite{EMT15} for more details).

There are several papers in literature considering $2$--colourings of regular graphs which satisfy condition $(ii)$, but not necessarily condition $(i)$, see \cite{ADOV, BS, HST, LMST}. In particular, it is easy to see that every cubic graph has a $2$--colouring where all monochromatic connected components are of order at most $2$. But, in general, such a colouring does not satisfy condition $(i)$, so it is not a $2$--bisection. Thus, the existence of a $2$--bisection in a cubic graph is not guaranteed. For instance, the Petersen graph does not admit a $2$--bisection. However, the Petersen graph is an exception since it is the only known bridgeless cubic graph without a $2$--bisection. This led Ban and Linial to pose the following conjecture:

\begin{conjecture}[Ban--Linial~\cite{BL16}]\label{banlinial}
Every bridgeless cubic graph admits a $2$--bisection, except for the Petersen graph.
\end{conjecture}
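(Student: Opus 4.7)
The plan is to attack Conjecture~\ref{banlinial} by combining structural reductions with a careful analysis of perfect matchings and their complementary $2$-factors. First, I would rephrase the goal: a $2$-bisection is a balanced partition $(\mathcal{B},\mathcal{W})$ of $V(G)$ in which every vertex has at most one neighbor of its own color, or equivalently both $G[\mathcal{B}]$ and $G[\mathcal{W}]$ induce matchings together with possibly some isolated vertices. This is a defective coloring of defect one together with a balance condition, and the reformulation naturally suggests attacking the problem via a perfect matching of $G$.

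My first step is a reduction to the cyclically $4$-edge-connected case. Given a bridgeless cubic $G$ with a non-trivial $2$- or $3$-edge-cut, I would split $G$ along the cut into two smaller bridgeless cubic graphs $G_{1},G_{2}$, apply induction to obtain $2$-bisections of each piece, and then show that the colorings can be glued back together, if necessary after swapping colors on one side, so that the global balance $|\mathcal{B}|=|\mathcal{W}|$ and the component-size bound are both preserved. Since the Petersen graph is cyclically $4$-edge-connected and does not arise from such a gluing of strictly smaller bridgeless cubic graphs, its exceptional role should be confined entirely to the cyclically $4$-edge-connected step.

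Step two is the cyclically $4$-edge-connected case itself. By Petersen's theorem $G$ has a perfect matching $M$; I would choose one for which the complementary $2$-factor $F=G-M$ has as few odd cycles as possible. If $F$ consists only of even cycles, then $G$ is $3$-edge-colorable, and properly $2$-coloring each cycle of $F$ gives every vertex two oppositely colored $F$-neighbors; the only possible monochromatic components are $M$-edges whose endpoints agree in color, and $|\mathcal{B}|=|\mathcal{W}|$ holds automatically because each even cycle contributes equally to the two classes. A small perturbation, flipping colors on carefully chosen $M$-edges, can then be used to kill any residual large monochromatic component while preserving balance.

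The genuinely hard case, which is why the full conjecture remains open, is that of snarks, where every $2$-factor has at least one odd cycle. On any such cycle a $2$-coloring must contain a monochromatic edge, and if the $M$-partners of its endpoints also agree in color, a monochromatic component of size at least three appears. The plan here would be a Kempe-type exchange: start from a near-proper coloring of $F$ and repeatedly swap colors along alternating structures in $M\cup F$ so that each unavoidable defect on an odd cycle is paired by $M$ with a vertex of the opposite color. The main obstacle, and the reason the Petersen graph is excluded from the conjecture, is to isolate precisely why Petersen resists every such exchange while all other snarks do not; this presumably requires a non-local invariant reflecting its high symmetry, girth $5$ and $3$-edge-uncolorability, and translating such an invariant into an explicit bisection construction for every other snark is where a genuinely new idea would be needed.
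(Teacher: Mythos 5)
There is a genuine gap --- and your own text concedes it. The statement you are addressing is an open conjecture; the paper does not prove it either, but only the special case of claw-free cubic graphs (Theorem~\ref{clfr2bisec}), via Oum's structure theorem for that class. Your outline stops exactly where the difficulty begins: in the final step you write that finding the invariant that makes the Petersen graph exceptional, and converting it into a bisection construction for every other snark, ``is where a genuinely new idea would be needed'' --- but that step \emph{is} the conjecture. Everything before it is standard and already known: if some perfect matching $M$ has a complementary $2$-factor with only even cycles (i.e.\ $G$ is class 1), then properly $2$-colouring each cycle of the $2$-factor already yields a $2$-bisection, since every monochromatic component is a single vertex or a single $M$-edge of order $2$; your proposed ``perturbation'' is not even needed there. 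The case where every $2$-factor contains odd cycles is the entire content of the problem, and the Kempe-exchange plan you sketch for it is a hope, not an argument.

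Your reduction step also fails, for a reason the paper itself points out when it says that ``so far, we cannot exclude the presence of a $3$-cycle or a $2$-edge-cut in a minimal counterexample.'' Splitting along a $2$- or $3$-edge-cut and gluing $2$-bisections of the pieces meets two obstructions simultaneously: the local condition can be destroyed at the cut, because monochromatic components on the two sides can merge into components of order greater than $2$, and the global condition $|\mathcal{B}|=|\mathcal{W}|$ need not survive the gluing, since swapping colours on one side cannot always repair both the balance and the component sizes at once. Worse, a piece obtained by the splitting can itself be the Petersen graph, for which no $2$-bisection exists, so the induction hypothesis is simply unavailable; handling this exceptional propagation is a serious obstacle, not a formality. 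By contrast, the paper's actual result sidesteps all of this by working inside the claw-free class, where Theorem~\ref{clfrstru} reduces the graph to a bridgeless cubic multigraph $H$ with vertices expanded into triangles and some edges into strings of diamonds; a perfect matching of $H$ (Petersen's theorem applied to $H$, not to $G$) then gives an explicit alternating colouring of the expanded even circuits, extended over the diamonds by symmetric or asymmetric colourings. None of that machinery is available for general bridgeless cubic graphs, which is precisely why the conjecture remains open.
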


As far as we know, the best positive result in this direction is given in \cite{EMT15}, where it is proven that every cubic graph (not necessarily bridgeless) admits a $3$-bisection.

A main difficulty in approaching this conjecture is the presence of both local and global conditions, that is: conditions (ii) and (i) in the definition of $k$-bisection, respectively. Some standard reductions on the girth and the connectivity of a minimal possible counterexample (which can be easily deduced in similar problems) do not work in this context. In particular, so far, we cannot exclude the presence of a $3$-cycle or a $2$-edge-cut in a minimal counterexample.

Recently, we have given a detailed insight into Ban--Linial's Conjecture (as well as some other related conjectures) in \cite{AGLM}, where we also presented theoretical and computational evidence for it. In particular, Ban--Linial's Conjecture is proven for bridgeless cubic graphs in the special case of cycle permutation graphs.

In this short note, we provide further evidence for it by proving Ban--Linial's Conjecture for claw--free cubic graphs (cf.\ Theorem \ref{clfr2bisec}).

\section{$2$-bisections of claw-free cubic graphs}\label{clfree}

The complete bipartite graph $K_{1,3}$ with colour classes of size $1$ and $3$, respectively, is usually called the {\em claw graph}.
A {\em claw--free graph} is a graph that does not have a claw as an induced subgraph.

In~\cite{CS05}, general claw-free graphs are characterized. In~\cite{Oum11}, Oum has characterized simple, claw-free bridgeless cubic graphs (cf.\ Theorem \ref{clfrstru}). In order to present such a characterization we need some preliminary definitions.

Let $G$ be a bridgeless claw-free cubic graph. An induced subgraph of $G$ that is isomorphic to $K_4-e$ is called a {\it diamond} and denoted by $D$ in what follows.
A {\it string of diamonds} of $G$ is a maximal sequence $D_{1},...,D_{k}$ of diamonds, in which $D_{i}$ has a vertex adjacent to a vertex of $D_{i+1}$, $1\leq i \leq k-1$ (see Figure~\ref{sod}). Moreover, if $G$ is a connected claw-free simple cubic graph such that each vertex lies in a diamond, then $G$ is called a {\it ring of diamonds} (see Figure~\ref{rod}).

\begin{figure}[h]%{8cm}	
	\centering
		\includegraphics[height=2.5cm]{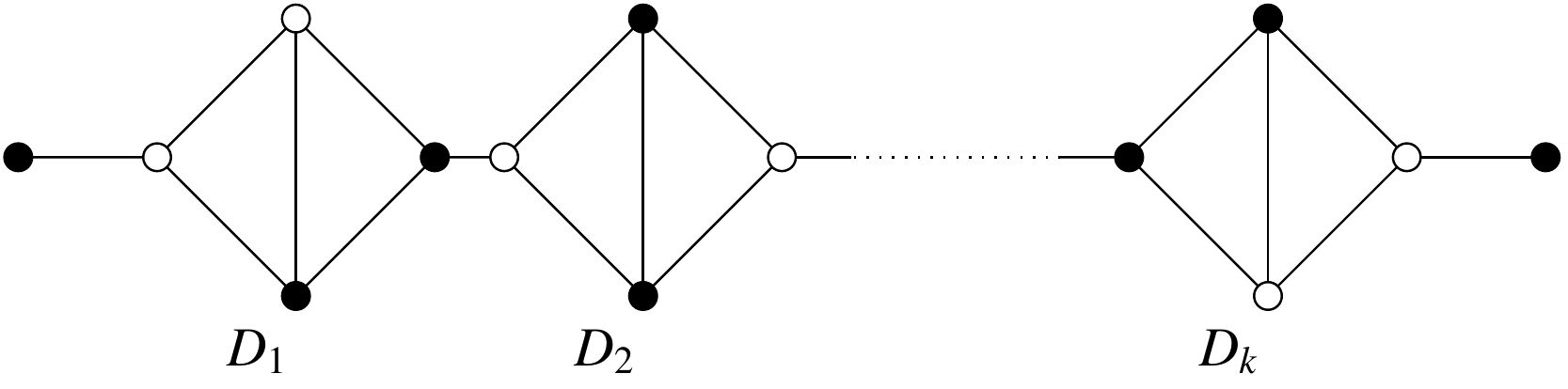}
		\caption{A string of diamonds}\label{sod}		
\end{figure}

\begin{figure}[h]%{8cm}	
	\centering
		\includegraphics[height=6cm]{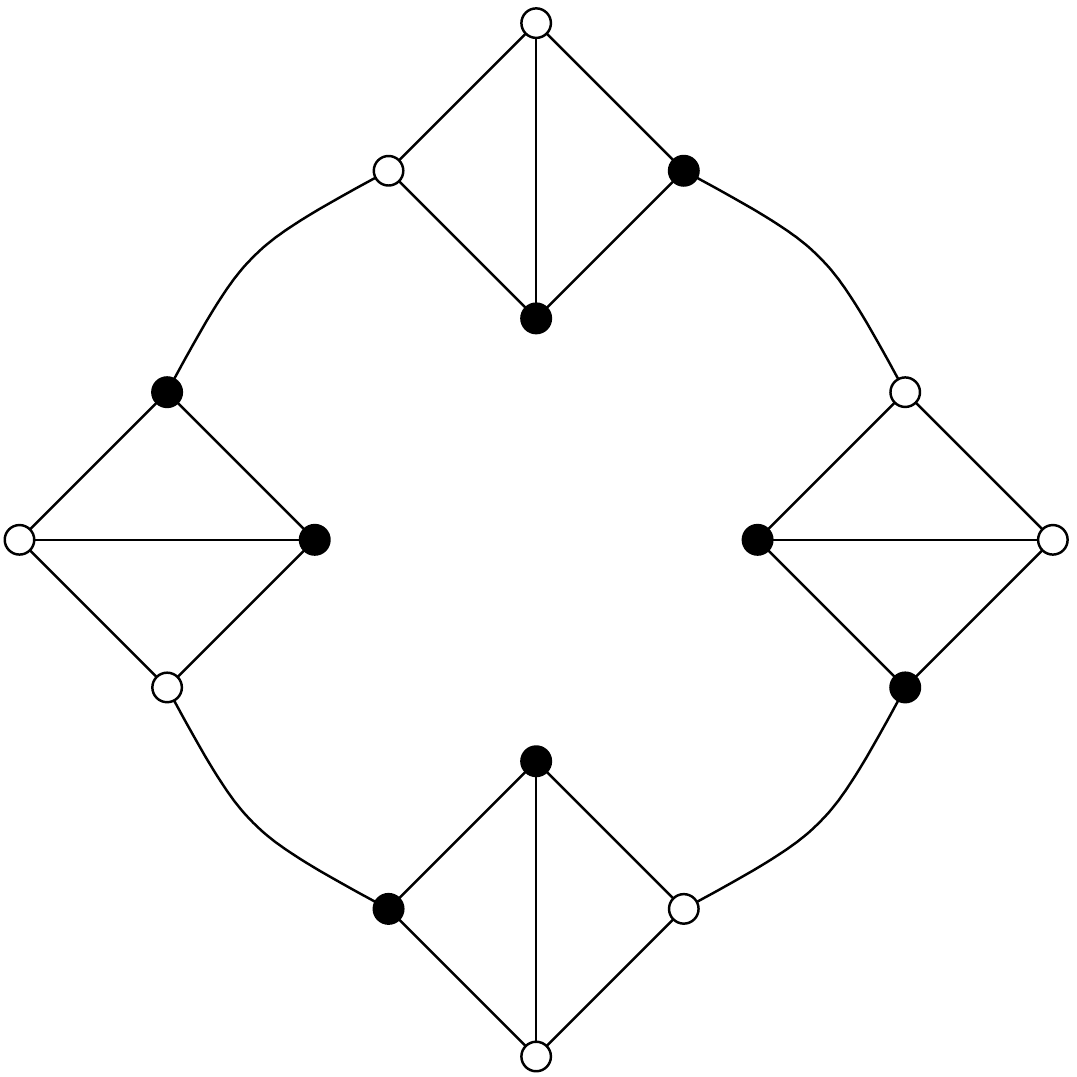}
		\caption{A ring of diamonds}\label{rod}		
\end{figure}

\begin{theorem}[Oum \cite{Oum11}]\label{clfrstru}
A graph $G$ is bridgeless claw-free cubic if and only if either

\begin{enumerate}[label=(\roman*)]
\item $G \simeq K_4$
\item $G$ is a ring of diamonds
\item $G$ can be built from a bridgeless cubic multigraph $H$ by replacing some edges of $H$ with strings of diamonds and replacing each vertex of $H$ with a triangle.
\end{enumerate}
\end{theorem}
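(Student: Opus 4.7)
The plan is to start with the easy direction $(\Leftarrow)$ and verify that each of the three families produces a bridgeless claw-free cubic graph. Cubicity is built into the constructions. Claw-freeness reduces to observing that every vertex of $G$ has an edge inside its neighbourhood, since each vertex lies in either a triangle or a diamond. Bridgelessness of the third family follows from that of $H$: any edge of $G$ lies inside a triangle or a diamond (both $2$-edge-connected, hence containing no bridge), or is a boundary edge between adjacent structures, in which case a path avoiding it still exists using the $2$-edge-connectivity provided by the non-bridge $H$-edge it belongs to.

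For the forward direction, let $G$ be a connected bridgeless claw-free cubic graph and, for each $v \in V(G)$, let $t(v)$ denote the number of edges induced by $N(v)$ in $G$. Claw-freeness forces $t(v) \geq 1$, and cubicity gives $t(v) \in \{1,2,3\}$. If $t(v) = 3$ for some $v$, then $G[\{v\} \cup N(v)] \simeq K_4$ is a $3$-regular induced subgraph of a connected cubic graph and therefore equals $G$, which is case (i). From this point I may assume $t(v) \in \{1,2\}$ for every vertex. Every vertex $v$ with $t(v) = 2$ lies in the induced diamond $G[\{v\} \cup N(v)]$, and a direct degree count rules out two diamonds sharing a vertex; I would then group the diamonds of $G$ into maximal strings by following the unique outside edge at each degree-two-in-diamond external vertex.

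The critical local analysis concerns a vertex $v$ with $t(v) = 1$ and $N(v) = \{p,q,r\}$, $pq \in E$: either $p$ and $q$ share a common neighbour $s \notin \{v,p,q,r\}$, in which case $\{v,p,q,s\}$ induces a diamond with $v$ external to it; or no such $s$ exists and $\{v,p,q\}$ is a ``standalone'' triangle none of whose vertices lies in any diamond -- rerunning the same argument at $p$ and $q$ confirms both the triangle-wide standalone property and the vertex-disjointness of distinct standalone triangles. Now if no standalone triangle arises, every vertex lies in a diamond; the outside edges of external vertices chain the diamonds into a sequence with no free endpoint, and connectedness forces this chain to close into a single ring, giving case (ii). Otherwise I would form the multigraph $H$ by contracting each standalone triangle to a single vertex and each maximal diamond string to a single edge. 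By construction $H$ is cubic and $G$ is obtained from $H$ exactly as in case (iii); bridgelessness of $H$ transfers from that of $G$, because a bridge of $H$ whose preimage in $G$ is a single edge is itself a bridge of $G$, while a bridge of $H$ covered by a nonempty diamond string $D_1,\ldots,D_k$ forces the single edge joining the triangle at one end of the string to $D_1$ to be a bridge of $G$.

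The main obstacle I anticipate is the $t(v)=1$ case analysis, namely making the dichotomy between ``$v$ is external in a diamond'' and ``$\{v,p,q\}$ is a standalone triangle'' genuinely exhaustive, and ensuring it propagates symmetrically from $v$ to the other two triangle vertices. A secondary subtlety is excluding loops in $H$ -- i.e., configurations in which both endpoints of a single diamond string attach to the same standalone triangle -- which is again handled by bridgelessness of $G$, since otherwise the unique outside edge at the remaining triangle vertex would be a bridge of $G$.
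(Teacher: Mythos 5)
The paper offers no proof of this statement: it is quoted from Oum~\cite{Oum11} as a known characterization, so there is no in-paper argument to compare yours against. Judged on its own, your outline is sound and is essentially the standard structural proof one would expect. The parameter $t(v)$ correctly classifies every vertex: claw-freeness plus cubicity force $t(v)\ge 1$ (for a cubic graph, claw-freeness at $v$ is \emph{equivalent} to $N(v)$ containing an edge, a point you should state explicitly since you also rely on it in the backward direction); $t(v)=3$ gives $G\simeq K_4$ by regularity and connectedness; $t(v)=2$ makes $v$ a degree-three vertex of an induced diamond; and $t(v)=1$ splits into ``external vertex of a diamond'' versus ``standalone triangle'' exactly as you describe. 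The two steps you flag as delicate do go through: distinct induced diamonds in a cubic graph are vertex-disjoint (a shared vertex forces the two diamonds to coincide, since a degree-three diamond vertex has all its neighbours inside the diamond), and the standalone property propagates around the triangle because for a triangle vertex $p$ the unique edge inside $N(p)$ is forced to be the opposite edge of that same triangle, so rerunning the dichotomy at $p$ and $q$ yields the same triangle. Your treatment of bridgelessness in both directions is correct, and the loop issue resolves itself: a loop at a vertex of a cubic multigraph makes the third edge at that vertex a bridge, so bridgelessness of $G$ excludes it without a separate argument. One caveat worth recording if you write this up in full: the statement implicitly assumes $G$ connected, since otherwise a disjoint union of two copies of $K_4$ is bridgeless, claw-free and cubic but falls under none of (i)--(iii) as literally stated; your forward direction correctly works component by component only under that reading.
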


The main purpose of this section is proving Conjecture \ref{banlinial} for the class of claw-free cubic graphs (cf.\ Theorem \ref{clfr2bisec}).

We recall that a $2$--bisection is a bisection $(B,W)$ such that the connected components in the two induced subgraphs $G[B]$ and $G[W]$ have order at most two. In other words, every induced subgraph is a union of isolated vertices and isolated edges.

Consider a $2$-bisection of a claw-free cubic graph and consider the colouring of vertices of a diamond of $G$. Since any three vertices of a diamond induce a connected subgraph, we have exactly two vertices of the diamond in each colour class. Then, we have only two possibilities: either the two non-adjacent vertices receive different colours (see the diamond $D_1$ in Figure~\ref{sod}) or the same colour (see the diamond $D_2$ in Figure~\ref{sod}). We call the former colouring {\em asymmetric} the latter one {\em symmetric}.

\begin{theorem}\label{clfr2bisec}
Let $G$ be a bridgeless claw-free cubic graph. Then, $G$ admits a $2$-bisection.
\end{theorem}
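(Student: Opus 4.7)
My plan is to apply Oum's theorem (Theorem~\ref{clfrstru}) and handle the three structural cases separately. For $G \cong K_4$, any balanced $2$-colouring of the four vertices partitions them into two disjoint monochromatic edges, giving a $2$-bisection. For $G$ a ring of $k$ diamonds $D_1,\dots,D_k$, I would colour every diamond asymmetrically in a consistent orientation: let $p_i, q_i$ denote the two non-adjacent vertices of $D_i$ and suppose the ring's external edges are $q_i p_{i+1}$ (indices modulo $k$). Set $p_i$ black and $q_i$ white for every $i$, and split the internal pair of $D_i$ so that $p_i$ sits in a black internal edge and $q_i$ in a white internal edge. Then every external edge is bichromatic, each diamond contributes exactly one monochromatic edge of each colour, and the two colour classes are balanced at $2k$ vertices with all monochromatic components of size~$2$.

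For the main case~(iii), $G$ arises from a bridgeless cubic multigraph $H$ by inflating each vertex into a triangle and replacing some edges with strings of diamonds. I would fix a perfect matching $M$ of $H$ (which exists by Petersen's theorem applied to bridgeless cubic multigraphs) together with a balanced $2$-colouring $m \colon V(H) \to \{B,W\}$ to be chosen. In each triangle $T_v$ of $G$, the vertex adjacent to the $M$-edge of $v$ in $G$ (either directly or through the replacing string) is designated as the ``alone'' vertex with colour $\bar m(v)$, while the other two vertices of $T_v$ receive the majority colour $m(v)$; thus $T_v$ contributes a monochromatic edge of size~$2$. For each string of diamonds replacing an edge of $H$, I would colour the diamonds (symmetric or asymmetric as required) so that every diamond contributes $2$ black and $2$ white vertices with internal monochromatic components of size at most~$2$, and so that the two endpoints of the string carry the colours demanded by the adjacent triangles. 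A key observation, which can be verified by a short direct construction, is that a string admits such a colouring for \emph{any} prescribed pair of end-colours. The only remaining condition for the resulting colouring to be a $2$-bisection is that whenever an edge $uv$ of $H$ is kept directly in $G$ and is not in $M$, one has $m(u) \neq m(v)$; equivalently, $m$ must be a proper $2$-colouring of the subgraph of direct non-$M$ edges of $H$.

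The main obstacle I anticipate is the case where $H$ is not $3$-edge-colourable (e.g.\ $H$ is the Petersen graph), so no $M$ makes $H - M$ bipartite and no globally proper $m$ exists. I would resolve this by exploiting two sources of flexibility: an edge of $H$ replaced by a string imposes no constraint on $m$, so odd cycles of $H - M$ passing through a string are automatically harmless; and for an odd cycle consisting entirely of direct edges of $G$, one can change the alone-vertex choice at a single triangle along the cycle, converting the cycle into an even-length path for colouring purposes at the cost of introducing a single cross-cycle equality between $m(v_0)$ and $m(v_0')$, where $v_0$ is the modified vertex and $v_0'$ is its $M$-neighbour. Since $|V(H)|$ is even, the number of odd cycles in $H - M$ is also even, and by careful choice of modification vertex and per-cycle orientation of the $2$-colouring one can make all cross-cycle equalities compatible while preserving balance. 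Verifying this global compatibility is the most delicate step of the argument.
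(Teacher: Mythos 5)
Your handling of $K_4$, of rings of diamonds, and of the reduction of strings of diamonds (a string can indeed join end-vertices carrying any prescribed pair of colours, using symmetric or asymmetric diamond colourings as needed) matches the paper and is correct. The gap is in case (iii) of Theorem~\ref{clfrstru}, at exactly the step you yourself flag as delicate. Your per-triangle scheme --- the two non-$M$ vertices of $T_v$ receive $m(v)$ and the $M$-vertex receives the opposite colour --- forces $m$ to be a proper $2$-colouring of the graph of direct non-$M$ edges of $H$, which is impossible whenever $H-M$ contains an odd circuit made entirely of direct edges (for instance $G=H^{\bigtriangleup}$ with $H$ the Petersen graph and no strings at all). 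Your proposed repair, re-designating the ``alone'' vertex at one triangle per odd cycle, produces cross-constraints tying the colour of a neighbour on the cycle to the colour of the $M$-partner of the modified vertex; when that partner lies on the same odd cycle, or when such constraints chain through several cycles via $M$, their simultaneous satisfiability (together with balance) is not at all evident, and the observation that the number of odd cycles of $H-M$ is even does not by itself settle it. Since this is the crux of the whole argument, the proof as written is incomplete.

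The paper avoids the issue entirely by choosing the opposite per-triangle colouring: give the two non-$M$ vertices of each triangle \emph{different} colours. Concretely, in $H^{\bigtriangleup}$ the complement of $M'$ splits into one component per circuit $C_i$ of the $2$-factor $H-M$, and the non-$M$ vertices of the triangles along $C_i$ form a circuit $C_i'$ of length $2|C_i|$, which is even regardless of the parity of $|C_i|$ and hence can always be coloured alternately. Each $M$-vertex then has its two in-triangle neighbours of opposite colours, so it lies in a monochromatic component with at most one of them; colouring the two ends of each $M'$-edge with opposite colours keeps those components of order at most $2$ and makes the whole colouring balanced. No proper-colouring condition on $H$, no perfect-matching parity argument, and no case analysis on odd cycles is required. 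To salvage your own route you would need to actually prove the global compatibility claim; it is far simpler to flip the roles of the triangle vertices as above.
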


\begin{proof}
It is straightforward that $K_4$ and every ring of diamonds admit a $2$-bisection: we can trivially generalize the example from Figure~\ref{rod} by colouring each diamond of the ring with a suitable chosen asymmetric colouring.

Consider $G$ as in Theorem~\ref{clfrstru}$(iii)$: let $H$ be the underlying bridgeless cubic multigraph and let $H^{\bigtriangleup}$ be the graph obtained by replacing each vertex of $H$ with a triangle. Then $G$ is built from $H^{\bigtriangleup}$ by replacing some edges, not in the new triangles, with strings of diamonds.

First of all, we prove that if $H^{\bigtriangleup}$ admits a $2$-bisection, then $G$ admits a $2$-bisection.
We start from a $2$-bisection of $H^{\bigtriangleup}$ and we colour all vertices of the strings of diamonds of $G$ such that we obtain a $2$-bisection of $G$.
Let $uv$ be an edge of $H^{\bigtriangleup}$ which is replaced with a string of diamonds in $G$.
If $u$ and $v$ receive the same colour, say white, in a $2$-bisection of $H^{\bigtriangleup}$, then we colour the vertices of $G$ corresponding to $u$ and $v$ with the colour white (the same colour they have in $H^{\bigtriangleup}$), and we colour the vertices of each diamond of the string with a symmetric $2$-colouring such that non-adjacent vertices are black.
On the other hand, if $u$ and $v$ receive different colours in the $2$-bisection of $H^{\bigtriangleup}$, then, again, we colour the vertices of $G$ corresponding to $u$ and $v$ with the same colour they have in $H^{\bigtriangleup}$, and we colour the vertices of each diamond of the string with an asymmetric $2$-colouring by paying attention that adjacent vertices of distinct diamonds receive different colours.

Now we prove that $H^{\bigtriangleup}$ admits a $2$-bisection. Let $M$ be a perfect matching of $H$ (since $H$ is bridgeless, the existence of $M$ is guaranteed by a classical theorem of Petersen, see \cite{Pet}). The complement of $M$ in $H$ is a $2$--factor of $H$ (it could have circuits of length $2$, since $H$ is a multigraph), say $C_1, \ldots C_t$ are its circuits. Every edge of $M$ naturally corresponds to an edge of $H^{\bigtriangleup}$: denote by $M'$ the set of those edges in $H^{\bigtriangleup}$. Every connected component of the complement of $M'$ in $H^{\bigtriangleup}$ is isomorphic to a circuit $C_i$ having each vertex replaced by a triangle. Denote by $C'_i$ the circuit corresponding to $C_i$ of length $2|C_i|$ (cf. Figure \ref{evencircuit}).

\begin{figure}[h]%{8cm}	
	\centering
		\includegraphics[height=4cm]{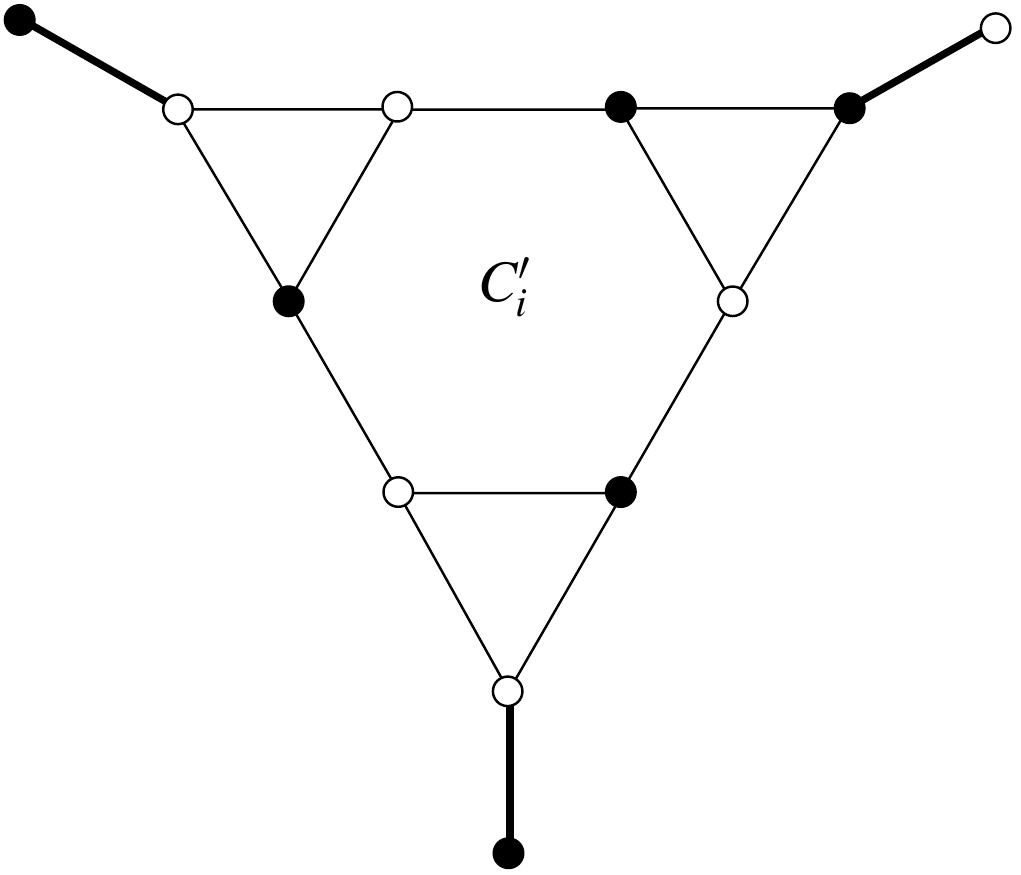}
		\caption{Pictures of $C'_i$ in $H^{\bigtriangleup}$. Edges of $M'$ in bold.}\label{evencircuit}		
\end{figure}

We colour the vertices of the even circuit $C'_i$ alternately black and white. Moreover, we colour the ends of every edge of $M'$ with different colours. It is straightforward that such a $2$-colouring is a bisection. Furthermore, every vertex of a circuit $C'_i$ is adjacent to at least two vertices of the opposite colour and every vertex not in a circuit $C_i'$ is adjacent to exactly two vertices of the opposite colour (its neighbour in $M'$ and one of its further neighbours).
\end{proof}

\end{document}